\definecolor{marin}{rgb}   {0.,   0.3,   0.7} 
\definecolor{rouge}{rgb}   {0.8,   0.,   0.} 
\definecolor{sepia}{rgb}   {0.8,   0.5,   0.} 
\theoremstyle{plain} 
\newtheorem{theorem}{Theorem}[section]
\newtheorem{lemma}[theorem]{Lemma}
\newtheorem{proposition}[theorem]{Proposition}
\newtheorem{corollary}[theorem]{Corollary} 
\newtheorem{definition}[theorem]{Definition} \theoremstyle{remark}
\newtheorem{remark}[theorem]{Remark}
\newcommand{\R}{  \mathbb{R}   }
\newcommand{\C}{  \mathbb{C}   }
\newcommand{\Z}{  \mathbb{Z}   }
\newcommand{\N}{  \mathbb{N}   }
\newcommand{\Hc}{  \mathcal{H}   }
\newcommand{\Sc}{  \mathcal{S}   }
\newcommand{\dd}{  \text{d}   }
\renewcommand{\l}{  \ell  }
\renewcommand{\phi}{  \varphi  }
\newcommand{\be}{\begin{equation}}
\newcommand{\ee}{\end{equation}}
\newcommand{\ben}{\begin{equation*}}
\newcommand{\een}{\end{equation*}}
\newcommand{\Norm}[2]{\|#1\|\left.\vphantom{T_{j_0}^0}\!\!\right._{#2}}         
\newcommand{\scal}[2]{\langle #1,#2 \rangle}    
\numberwithin{equation}{section}
 \newcommand{\Hcal}{\mathcal{H}}
\author{Erwan Faou}
\address{INRIA \& IRMAR. 
ENS Rennes, Avenue Robert Schumann F-35170 Bruz, France. } 
\email{Erwan.Faou@inria.fr}
 \author{Tiphaine J\'ez\'equel}
\address{IUT de Lannion, Universit\'e de Rennes 1. Rue Edouard Branly, BP 30219, 22302 Lannion Cedex, France.  } 
\email{Tiphaine.Jezequel@univ-rennes1.fr}
\newcommand{\Hct}{\widetilde{\Hc}}
\title[Normalized gradient for computing ground states]
{Convergence of a normalized gradient algorithm for computing ground states}
\begin{document}

\begin{abstract}
We consider the approximation of the ground state of the one-dimensional cubic nonlinear Schr\"odinger equation by a normalized gradient algorithm combined with linearly implicit time integrator, and finite difference space approximation. 
We show that this method, also called {\em imaginary time evolution method} in the physics literature, is convergent, and we provide error estimates: the algorithm converges exponentially towards a modified solitons that is a space discretization of the exact soliton, with error estimates depending on the discretization parameters. \end{abstract}                                                                                                

\subjclass{ 35Q1, 35Q55, 65N12, 65J15}
\keywords{Nonlinear Schr\"odinger equation, ground state, imaginary time method}
\thanks{This work has been supported by the ERC Starting Grant GEOPARDI No 279389
}

\maketitle

\section{Introduction}

The goal of this paper is to give a convergence proof of a normalized gradient algorithm used to compute numerically ground states of Schr\"odinger equations fulfilling symmetry and coercivity conditions 
as considered in the seminal works of Weinstein \cite{Weinstein85} and Grillakis, Shatah and Strauss \cite{Grill87,Grill90}. 
This algorithm is also called {\em imaginary time method} Êin the physics literature: see for instance
\cite{EB95,Adh1,Adh2,CST00,baotang} and the reference therein. 
Let us describe the algorithm in the case of the
focusing cubic non linear Schr\"odinger equation
\begin{equation}\tag{NLS}\label{NLS}
i\partial_t\psi=-\frac12\Delta\psi-|\psi|^2\psi, 
\end{equation}
set on $\R$, 
where $\psi(t,x)$ depends on space variables $x \in \R$. With this equation is associated the energy \begin{equation}\label{H}
H(\psi,\bar\psi)= \frac14\int_\R |\nabla\psi|^2 -|\psi|^4,
\end{equation}
that is preserved by the flow of (NLS) for all times. 
The equation (NLS) can be written 
$$i\partial_t\psi=-\frac12\Delta\psi-|\psi|^2\psi=2 \frac{\partial  H}{\partial \bar \psi}(\psi,\bar \psi).$$

In the rest of this paper, the notation $\nabla H$ will denote the $L^2$ derivative of the energy $H$ with respect to {\em real} functions $\psi$. Note that we have for a real function $u$

$$
\nabla H(u) = 2 \frac{\partial H}{\partial \bar \psi}(u,u) = - \frac{1}{2}\Delta u -  u^3,
$$
the left-hand side denoting the Fr\'echet derivative of $H(u)$ considered as a functional acting on real functions. Note that naturally, $\nabla H \in H^{-1}$ with the embedding $H^{-1} \subset L^2 \subset H^1$.

With these notations, the ground state $\eta(x)$ is defined as the unique real symmetric minimizer in $H^1$ of the problem 
\begin{equation}
\label{minibar}
\min\limits_{\Norm{\psi}{L^2}=1}H(\psi).
\end{equation}
In the one dimensional cubic case considered in this paper, 
explicit computations show that 
\begin{equation*}
\eta(x):=\frac{1}{2} \text{sech}\left(\frac{x}2\right).
\end{equation*}
We denote by $\lambda$ the Lagrange multiplier associated with this minimization problem, such that 
\begin{equation}
\label{etacube}
\nabla H(\eta)=- \frac12 \Delta \eta - \eta^3 = -\lambda\eta.
\end{equation}
With the ground state $\eta$ is associated the solution $\eta(t,x) = \eta(x) e^{i \lambda t}$ of the time dependent equation \eqref{NLS}. By using translation and scaling, the ground state gives rise to a family of explicit solutions of \eqref{NLS} that have the property to be {\em orbitally stabe} in $H^1$, see \cite{Weinstein85,Grill87,Grill90,BFG}. For instance, any solution starting close to $\eta(x)$ will remain close to the manifold $\{ e^{i\alpha} \eta ( x - c)\, | \alpha,c \in \R\} \subset H^1$, the rotation and translation being natural invariant group actions of the nonlinear Schr\"odinger equation \eqref{NLS}.

In more general situations, $\eta$ is not explicitly known, and one has to rely on numerical simulations to compute it. 
To this aim, the {\em imaginary time method}, which is a nonlinear version of the normalized gradient algorithm is widely used. The goal of the present paper is to analyse the efficiency of this method in the simple case described above (the results obtained are in fact valid in more general situations - essentially all situations where the Grillakis-Shatah-Strauss arguments apply). 

The time-discretized algorithm consists in defining a sequence a functions $\{\psi_n\}_{n \in \N}$ as follows: 
\begin{itemize}
\item[(i)]  An intermediate function $\psi_{n}^*$ is defined as a numerical approximation of the solution of the parabolic equation 
\begin{equation}
\label{telephone}
\partial_t \psi = \frac12\Delta\psi + |\psi|^2\psi = - \nabla H(\psi)
\end{equation}
over a time interval $[0,\tau]$, where $\tau$ is a given time step. 
To compute $\psi_n^*$, we will see that the best results are given by the linearly implicit method 
\begin{equation}
\label{poulguen}
\psi_n^*  = \psi_n -  \tau \widehat{ \nabla H}(\psi_n,\psi_n^*), 
\end{equation}
where 
$$
 -  \widehat {\nabla H}(\psi_n,\psi_n^*) =  \frac12\Delta\psi_n^* + 
 |\psi_n|^2\psi_n^{*};
 $$

 \item[(ii)] Then we define the normalized function 
\begin{equation}
\label{boston}
\psi_{n+1} = \frac{\psi_n^*}{\Norm{\psi_n^{*}}{L^2} }. 
\end{equation}
\end{itemize}
Note that the algorithm presented above preserves the real nature of the function $\psi$: if $\psi_0$ is real valued, then for all $n \in \N$, $\psi_n$ is real, and that the same holds true for symmetric functions satisfying $\psi_n(-x) = \psi_n(x)$.

Our results can be summarized as follows: 
\begin{itemize}
\item In the semi-discrete case described above, if the initial data $\psi_0$ is real, symmetric and sufficiently close to $\eta$, then $\psi_n$ converges exponentially towards $\eta$ in $H^1$. 
\item In the fully discrete case, where the discretization is space is made by finite difference in space with mesh $h$ combined with a Dirichlet cut-off for large values of $x$ (namely $x \leq Kh$), then we prove the exponential convergence towards a modified soliton $\eta_{h,K}$ that is $\mathcal{O}( h + \frac{1}{h^2} e^{- C_1 Kh})$ close to the exact soliton $\eta$. 
\end{itemize}

The main property explaining the excellent performance of this method is the fact that the linearly implicit numerical scheme exactly preserves the ground state: if $\psi_n = \eta$, then $\psi_{n+1} = \eta$, and the same holds true for discrete in space ground states satisfying a discrete version of \eqref{etacube}.  This fact is very general: it holds for any linearly implicit scheme applied to a semi-linear PDEs. This important feature makes of course this scheme much more attractive than other possible schemes where the nonlinearity would be approached by 
\begin{equation}
\label{otherd}
 -  \widehat {\nabla H}(\psi_n,\psi_n^*) =  \frac12\Delta\psi_n^* + 
 \left\{\begin{array}{ll}|\psi_n|^2\psi_n & \mbox{\rm(semi-explicit)},\\
  |\psi_n^*|^2\psi_n^{*} & \mbox{\rm (fully-implicit)}, 
 \end{array}
 \right.
\end{equation}
despite the fact that fully-implicit schemes enjoy the energy diminishing property for standard gradient system (see \cite{hairerlubich}). 
As we will discuss later, these schemes still converge, but towards modified ground states $\eta_\tau = \eta + \mathcal{O}(\tau)$. As these results would be weaker for longer detailed proofs, we only give the main arguments to obtain them, keeping a full detailed convergence proof for the linearly implicit scheme. 

\section{The Hamiltonian near the ground state}

We work in the space $V$ of real symmetric functions of $H^1$:
\ben
V:=\left\{\phi\in H^1(\R,\R)\,  | \, \phi(-x)=\phi(x)\right\}.
\een
We consider the usual $L^2$ and $H^1$ norms, and denote by $\scal{\cdot}{\cdot}$ the canonical real scalar product on $L^2$ :
\ben
\scal{\phi}{\psi}=\int_{\R}\phi(x)\psi(x)\dd s,\quad\Norm{\phi}{L^2}^2=\scal{\phi}{\phi},\quad
\Norm{\phi}{H^1}^2=\Norm{\phi}{L^2}^2+\Norm{\partial_x\phi}{L^2}^2.
\een


\subsection{Coordinates in the neighborhood of the ground state}
We introduce the set of the functions $R$-close to $\eta$
\ben
\mathcal{U}(R):=\left\{\phi\in V\, |\, \Norm{\phi-\eta}{H^1}<R\right\},
\een
the set
\ben
W:=\left\{u\in V \, | \, \scal{u}{\eta}=0\right\}
\een
and the map $\chi$
\ben
\begin{array}{rcl}
\chi : \R\times W & \rightarrow & V\\
(r,u) & \mapsto & (1+r)\eta+u.
\end{array}
\een
The map $\chi$ allows to use $(r,u)$ as coordinates in $\mathcal{U}(R)$. Observe that $\chi$ is smooth with bounded derivatives and so is the inverse $\chi^{-1}$, from the explicit formula
\ben
\begin{array}{rcl}
\chi^{-1} : V & \rightarrow & \R\times W\\
\psi & \mapsto & (r(\psi),u(\psi))= \left(\scal{\psi}{\eta}-1,\psi-\scal{\psi}{\eta}\eta\right).
\end{array}
\een
We will also use the following notations: We define
the $L^2$  projectors 
$$
P_\eta u = \langle u,\eta\rangle \eta
\quad \mbox{and}\quad
P_W u = I - P_\eta, 
$$
and we define the function 
\begin{equation}
\Hct(r,u) = (H \circ \chi) (r,u). 
\end{equation}
We can verify the following relations: 
\begin{equation}
\label{schimmel}
\partial_r \Hct(r,u) = \scal{\nabla H(\chi(r,u))}{\eta} = \eta^{-1} (P_\eta \nabla H) (\chi(r,u))  \in \R, 
\end{equation}

and
\begin{equation}
\label{steinway}
\nabla_u \Hct(r,u)  = P_W \nabla H ( \chi(r,u)) \in H^{-1}. 
\end{equation}

Before collecting some expressions of the Hamiltonian and the gradient flow in coordinates $(r,u)$, let us introduce the following notations: 
\begin{definition}
Let $p \geq 1$. 
We say that $R(u) = \mathcal{O}(\Norm{u}{H^1}^p)$ if for all $B$,  there exists a constant $C$ such that for all $u\in H^1$, $\Norm{u}{H^1} \leq B$, we have 
$$
\Norm{R(u)}{H^1}  \leq C \Norm{u}{H^1}^p.
$$
We will also use the notation 
$R(u,v) = \mathcal{O}(\Norm{u,v}{H^1}^p)$ if for all $B$, there exists $C$ such that for all $u,v \in H^1$ satisfying $\Norm{u}{H^1} \leq B$ and $\Norm{v}{H^1} \leq B$, then we have 
$$
\Norm{R(u,v)}{H^1}  \leq C \Big( \Norm{u}{H^1}^p + \Norm{v}{H^1}^p\Big). 
$$
Finally, a function $u = \mathcal{O}(r)$ if $\Norm{u}{H_1} \leq C r$ for $r$ small enough and a constant $C$ independent of $u$. 
\end{definition}
With these notations and the fact that $\psi = (1 + r) \eta + u$, we compute using \eqref{etacube} that 
\begin{eqnarray*}
- \nabla H (\psi) &=& \frac12 \Delta \psi + \psi^3\\
&=&  (1 + r) (- \eta^3 +  \lambda \eta) + \frac12\Delta u + u^3 \\
&&+ ( 1 + r)^3 \eta^3 + 3 (1 + r)^2 \eta^2 u
+ 3 (1 + r) \eta u^2\\
&=&  \lambda \eta + \frac12\Delta u   + 3  \eta^2 u
+ \mathcal{O}(r + \Norm{u}{H^1}^2). 
\end{eqnarray*}
Note that to obtain the bound, we have used the fact that $\eta \in H^1$, as well as the estimate  
\begin{equation}
\label{gadin}
\Norm{uv}{H^1} \leq C \Norm{u}{H^1}\Norm{v}{H^1}, 
\end{equation}
for two functions $u$ and $v$. 
We deduce using \eqref{schimmel} and \eqref{steinway} that 
\begin{eqnarray*}
\partial_r \Hct(r,u) &=& - \lambda - \frac12\scal{u}{\Delta \eta}   - 3  \scal{\eta^3}{ u}
+ \mathcal{O}(r + \Norm{u}{H^1}^2)  \\
&=& - \lambda - 2  \scal{\eta^3}{ u}
+ \mathcal{O}(r + \Norm{u}{H^1}^2), 
\end{eqnarray*}
as $\scal{u}{\eta} = 0$ and $\scal{\eta}{\eta} = 1$, and 
\begin{eqnarray*}
\nabla_u \Hct(r,u) &=& - P_W( \lambda \eta + \frac12\Delta u   + 3  \eta^2 u) 
+ \mathcal{O}(r + \Norm{u}{H^1}^2) \\
&=& -  \frac12\Delta u   - 3  \eta^2 u + 2  \scal{\eta^3}{ u} \eta + \mathcal{O}(r + \Norm{u}{H^1}^2). 
\end{eqnarray*}

%

\subsection{Projection onto the unit $L^2$ sphere}
Let us define now the function $u\mapsto r(u)$ from $W$ to $\R$ by the implicit relation
$$\Norm{\chi(r(u),u)}{L^2}^2=1.$$
By explicit computation, we get
\be\label{eq:ru}
r(u)=-1+\sqrt{1-\Norm{u}{L^2}^2},
\ee
from which we deduce that $r(u)$ is well defined and smooth in a neighborhood of $0$ in $H^1$, and that moreover $|r(u)|=\mathcal{O}{(\Norm{u}{L^2}^2)}$ when $u$ is small enough. Hence, $u\mapsto\chi(r(u),u)$ is a local parametrization of $\mathcal{S} \cap V$ in a neighborhood of $\eta$, where
$$\mathcal{S}:=\left\{\psi\in V \, | \, \Norm{\psi}{L^2}=1\right\}.$$
Note that in this parametrization, $u = 0$ corresponds to the ground state $\eta$. 
We then define 
\be \label{Hcal}
\Hcal(u):=\Hct(r(u),u) = H(\chi(r(u),u)).
\ee
The main result in \cite{Weinstein85}, see also \cite{Grill87,Grill90,FGJS}, is the following: 

\begin{proposition}\label{prop:d2Hcal}
The point 
$u=0$ is a non degenerate minimum of $\Hcal$: there exist some positive constants $c_0$ and $\rho_0$ such that
\be \label{eq:d2Hcal}
\forall v\in W\quad d^2 \Hcal (0).(v,v)\geq c_0\Norm{v}{H^1}^2.
\ee
\end{proposition}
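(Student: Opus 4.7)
The plan is to compute $d^2\Hcal(0)$ explicitly, identify it with the quadratic form of the linearised Schr\"odinger operator $L_+ := -\tfrac12\Delta + \lambda - 3\eta^2$, and then invoke the classical spectral analysis of $L_+$ on the symmetric subspace orthogonal to $\eta$, in the spirit of \cite{Weinstein85,Grill87,Grill90}.

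\emph{Step 1 (computing the Hessian).} Expanding \eqref{eq:ru} gives $r(u) = -\tfrac12\Norm{u}{L^2}^2 + \mathcal{O}(\Norm{u}{L^2}^4)$, so that $r(0)=0$, $dr(0)=0$, and $d^2 r(0)(v,v) = -\Norm{v}{L^2}^2$. Differentiating $\Hcal(u) = \Hct(r(u),u)$ twice at $u=0$ with the chain rule, and using the values $\partial_r \Hct(0,0) = -\lambda$ and $\nabla_u \Hct(0,0) = 0$ already displayed in the previous subsection, the $\partial_r^2$ and mixed contributions drop out, leaving
\[
d^2\Hcal(0)(v,v) \;=\; \lambda\Norm{v}{L^2}^2 + d^2 H(\eta)(v,v) \;=\; \scal{L_+ v}{v},
\]
where the identity $d^2 H(\eta)(v,v) = \tfrac12\Norm{\partial_x v}{L^2}^2 - 3\int_\R \eta^2 v^2$ is obtained directly from \eqref{H}.

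\emph{Step 2 (coercivity of $L_+$ on $W$).} I would then invoke the classical spectral picture of $L_+$ as a short-range Schr\"odinger operator on $L^2(\R)$: exactly one simple negative eigenvalue (with an even, strictly positive eigenfunction), a simple zero eigenvalue spanned by $\partial_x\eta$, and essential spectrum $[\lambda,\infty)$. Restricting to $V$ kills the zero mode automatically, since $\partial_x\eta$ is odd. The remaining negative direction is killed by the constraint $\scal{v}{\eta}=0$ on $W$: since $\eta$ minimises $H$ on $\mathcal{S}\cap V$ by \eqref{minibar}, differentiating twice the inequality $H(\eta) \leq H(\chi(r(tv),tv))$ yields $\scal{L_+ v}{v}\geq 0$ for all $v \in W$. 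The Weinstein non-degeneracy argument, which in the one-dimensional cubic case reduces to the easily-checked fact $\partial_\lambda \Norm{\eta_\lambda}{L^2}^2 > 0$, then upgrades this into a strict $L^2$ lower bound $\scal{L_+ v}{v} \geq c_1 \Norm{v}{L^2}^2$ for some $c_1>0$.

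\emph{Step 3 (upgrade to $H^1$).} Rewriting the quadratic form as
\[
\tfrac12 \Norm{\partial_x v}{L^2}^2 \;=\; \scal{L_+ v}{v} - \lambda\Norm{v}{L^2}^2 + 3\int_\R \eta^2 v^2 \;\leq\; \scal{L_+ v}{v} + 3\Norm{\eta}{L^\infty}^2 \Norm{v}{L^2}^2,
\]
and combining with the $L^2$ coercivity of Step 2 via a suitable convex combination gives \eqref{eq:d2Hcal}.

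The main obstacle is Step 2, specifically the passage from $\scal{L_+ v}{v}\geq 0$ to a quantitative lower bound: the bare minimisation inequality yields only non-negativity, and the Weinstein non-degeneracy input is exactly what is needed to turn it into coercivity. Once this spectral information is available, the remaining arguments are routine Taylor expansions and algebraic manipulations.
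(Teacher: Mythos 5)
The paper offers no proof of this proposition at all: it is quoted directly from \cite{Weinstein85} (see also \cite{Grill87,Grill90,FGJS}), which is precisely the spectral input you defer to in Step~2, so your argument rests on the same ultimate source. Your reduction is sound and more detailed than what the paper provides --- the Hessian computation correctly identifies $d^2\Hcal(0)$ with the quadratic form of $L_+=-\tfrac12\Delta+\lambda-3\eta^2$ restricted to $W$ (consistent with the operator $A$ of \eqref{adele}), and the interpolation upgrading the $L^2$ lower bound to $H^1$ coercivity in Step~3 is standard.
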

Note that we have 
$$
\nabla_u r (u) = - \frac{u}{\sqrt{1 - \Norm{u}{L^2}^2}} = - u + \mathcal{O}( \Norm{u}{H^1}^3). 
$$
Hence, as $ r(u) = \mathcal{O}(\Norm{u}{H^1})$, we have 
\begin{eqnarray*}
\nabla_u \Hc(u) &=& \partial_r \Hct (r(u), u) \nabla_u r(u) + (\nabla_u \Hct) (r(u),u)\\
&=&  \lambda u  -  \frac12\Delta u   - 3  \eta^2 u + 2  \scal{\eta^3}{ u} \eta 
+ \mathcal{O}(\Norm{u}{H^1}^2) \\
&=&  P_W(  \lambda u  - \frac12\Delta u   - 3  \eta^2 u) 
+ \mathcal{O}(\Norm{u}{H^1}^2).  
\end{eqnarray*}
From the previous proposition, we deduce the following: 
\begin{corollary}
The operator $A: W \to W$ defined by 
\begin{equation}
\label{adele}
A u := P_W(  \lambda u  - \frac12\Delta u   - 3  \eta^2 u) 
\end{equation}
is $L^2$ symmetric and positive definite in $H^1$: We have 
\begin{equation}
\label{equivH1}
c\Norm{u}{H^1}^2 \leq \scal{u}{Au}\leq C \Norm{u}{H^1}^2
\end{equation}
for some constants $c$ and $C$, and 
$$
\nabla_u \Hc(u)  = A u  + \mathcal{O}(\Norm{u}{H^1}^2) . 
$$
\end{corollary}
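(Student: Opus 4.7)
The identity $\nabla_u \Hc(u) = Au + \mathcal{O}(\Norm{u}{H^1}^2)$ is already contained in the computation preceding the corollary: one simply checks that the leading-order term of $\nabla_u \Hc(u)$ displayed just above is exactly $Au$, once one notes that $P_W$ acts trivially on $u \in W$ in its linear part but produces the nontrivial projection $2\scal{\eta^3}{u}\eta$ from the term $3\eta^2 u$. So the content to prove lies in the two claims about $A$ itself.

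For the $L^2$ symmetry, the plan is to write $A = P_W B$ where $B u := \lambda u - \frac12 \Delta u - 3 \eta^2 u$. Since $\eta$ is real and smooth, $B$ is formally self-adjoint on $L^2$, and $P_W$ is the $L^2$-orthogonal projector onto $W$, hence self-adjoint. For $u,v \in W$ we use $P_W u = u$ and $P_W v = v$ to compute
$$
\scal{Au}{v} = \scal{P_W B u}{v} = \scal{Bu}{P_W v} = \scal{Bu}{v} = \scal{u}{Bv} = \scal{u}{P_W B v} = \scal{u}{Av}.
$$
The upper bound in \eqref{equivH1} then follows from the same identity $\scal{u}{Au} = \scal{u}{Bu}$: integration by parts gives $\scal{u}{-\tfrac12 \Delta u} = \tfrac12 \Norm{\partial_x u}{L^2}^2$, while $\lambda \Norm{u}{L^2}^2$ and $3\scal{u}{\eta^2 u}$ are bounded by $C\Norm{u}{L^2}^2$ using $\eta \in L^\infty$, yielding $\scal{u}{Au} \leq C\Norm{u}{H^1}^2$.

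For the lower bound, the key is to recognize $\scal{v}{Av}$ as the Hessian quadratic form $d^2 \Hc(0).(v,v)$ and then invoke Proposition \ref{prop:d2Hcal}. Concretely, from $\nabla_u \Hc(u) = Au + \mathcal{O}(\Norm{u}{H^1}^2)$ and the $L^2$ symmetry of $A$, Taylor-expanding $\Hc$ at $u = 0$ gives for every $v \in W$
$$
\Hc(tv) = \Hc(0) + \frac{t^2}{2} \scal{v}{Av} + \mathcal{O}(t^3) \quad \text{as } t \to 0,
$$
since $\nabla_u \Hc(0) = 0$ (as $u=0$ is a critical point of $\Hc$). Therefore $d^2 \Hc(0).(v,v) = \scal{v}{Av}$ for all $v \in W$, and Proposition~\ref{prop:d2Hcal} yields $\scal{v}{Av} \geq c_0 \Norm{v}{H^1}^2$.

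The main conceptual step is the identification $\scal{v}{Av} = d^2 \Hc(0).(v,v)$, which is the only place where the nontrivial coercivity result of Weinstein is actually used; everything else is a direct bookkeeping computation. No further technical obstacle is expected, since the quadratic form of $A$ on $W$ is just the standard linearized NLS Hessian restricted to the tangent space of the $L^2$ sphere, for which coercivity on $W$ is precisely what Proposition~\ref{prop:d2Hcal} provides.
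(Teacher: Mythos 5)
Your proposal is correct and follows exactly the route the paper intends: the paper states the corollary as an immediate deduction from the computation of $\nabla_u \Hc(u)$ just above it (which exhibits $Au$ as the linear part) together with Proposition~\ref{prop:d2Hcal}, the identification $\scal{v}{Av}=d^2\Hc(0).(v,v)$ being the only place where Weinstein's coercivity enters. The symmetry and upper-bound arguments you supply are the standard bookkeeping the paper leaves implicit.
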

Let us remark that the coercitivity relation \eqref{equivH1} combined with Cauchy-Schwartz inequality implies that 
$$
c\Norm{u}{H^1}^2 \leq \scal{u}{Au} \leq C \Norm{u}{L^2} \Norm{Au}{L^2} \leq C \Norm{u}{H^1} \Norm{Au}{L^2}
$$
for some constant $C$.  
We thus infer the existence of a constant $c > 0$ such that 
\begin{equation}
\label{monique}
\Norm{Au}{L^2} \geq c \Norm{u}{H^1} \quad \mbox{and} \quad \Norm{Au}{L^2}^2 \geq c \scal{u}{Au}. 
\end{equation}
%

\section{Continuous normalized gradient flow}

We consider the continuous normalized gradient flow (see for instance \cite{bao})
\begin{equation}\label{CNGF}
\partial_t\psi=-\nabla H(\psi)+\scal{\nabla H(\psi)}{\frac{\psi}{\Norm{\psi}{L^2}}}\frac{\psi}{\|\psi\|_{L^2}}, 
\end{equation}
which is the projection of the standard gradient flow $\partial_t\psi=-\nabla H(\psi)$ onto the unit $L^2$ sphere. 
The local existence of an $H^1$ solution to \eqref{CNGF} is guaranteed by standard argument, using the fact that $\Delta$ defines a semi group in $H^1$, and that $H^1$ is an algebra in dimension $1$ (see \eqref{gadin}).

\subsection{The gradient flow in local variables}

We assume for the moment that $\psi(t)$ remains in a ball sufficiently close to $\eta$ so that we can write $\psi(t) = ( 1 + r(t)) \eta + u(t)$ with $r(t) > -1$ and $u(t) \in W$. We have in this case $P_\eta \psi = ( 1 + r) \eta$ and $P_W \psi = u$. 
Note also that 
\begin{eqnarray*}
\scal{\nabla H(\psi)}{\psi}
&=& 
\langle P_\eta \nabla H, P_\eta \psi \rangle + \langle P_W \nabla H, P_W \psi \rangle\\
&=&  ( 1 + r) \partial_r \Hct  + \scal{\nabla_u \Hct}{u}
\end{eqnarray*}
and that
$$
\Norm{\psi}{L^2}^2 = ( 1+ r)^2 + \Norm{u}{L^2}^2. 
$$
It turns out that the relation  $\Norm{\psi}{L^2}^2 = 1$ implies that $\Norm{u}{L^2} \leq 1$. In the following we will only work with functions $u$ satisfying this condition. 
Applying successively the operators $P_\eta$ and $P_W$ to the equation \eqref{CNGF}, we obtain the relation 
\begin{eqnarray*}
\partial_t r &=& - \partial_r \Hct (r,u) + \frac{( 1 + r)^2}{( 1+ r)^2 + \Norm{u}{L^2}^2} \partial_r \Hct  + \frac{( 1 + r)}{( 1+ r)^2 + \Norm{u}{L^2}^2} \scal{\nabla_u \Hct}{u},\\
\partial_t u &=& - \nabla_u \Hct (r,u) + \frac{( 1 + r) u }{( 1+ r)^2 + \Norm{u}{L^2}^2} \partial_r \Hct  + \frac{u }{( 1+ r)^2 + \Norm{u}{L^2}^2} \scal{\nabla_u \Hct}{u}.
\end{eqnarray*}

As the $L^2$ norm of $\psi$ is preserved, we calculate directely that $(1 + r)^2 + \Norm{u}{L^2}^2$ is preserved along the flow of this system (and is constant equals to one), and that we can solve $r$ in terms of $u$ as above (see \eqref{eq:ru}). 
We thus obtain the closed equation 
\begin{equation}
\label{weber}
\partial_t u =- \nabla_u \Hct (r(u) ,u) + ( 1 + r(u) ) u \, \partial_r \Hct(r(u),u)  +u \scal{\nabla_u \Hct}{u}. 
\end{equation}
But we have 
$$
(\nabla_u \Hct) (r(u),u) = \nabla_u \Hc(u) + \partial_r \Hct (r(u), u) \frac{u }{\sqrt{1 - \Norm{u}{L^2}^2}}, 
$$
hence we can write the equation \eqref{weber} as 
\begin{eqnarray*}
\partial_t u &=& - \nabla_u \Hc(u) + u \langle u, \nabla_u \Hc(u) \rangle   - \partial_r \Hct (r(u), u) \frac{u }{\sqrt{1 - \Norm{u}{L^2}^2}}  \\
&&
+ u \sqrt{1 - \Norm{u}{L^2}^2} \partial_r \Hct(r(u),u)
 + \partial_r \Hct (r(u), u) \frac{ u \Norm{u}{L^2}^2 }{\sqrt{1 - \Norm{u}{L^2}^2}}. 
\end{eqnarray*}
and hence
\begin{equation}
\label{tard}
\partial_t u = - \nabla_u \Hc(u) 
+ u \langle u, \nabla_u \Hc(u) \rangle,  
\end{equation}
which is the gradient flow in local coordinates $(r(u),u)$ on $\mathcal{S}$.  
Note that with the notation of the previous paragraph, we we can write this equation as follows: 
$$
\partial_t u = - A u + R(u)
$$
with $R(u) = \mathcal{O}(\Norm{u}{H^1}^2)$. 
\subsection{Convergence of the flow}
We prove now that the solution of the normalized gradient flow \eqref{CNGF} converges towards the ground state $\eta$ if the initial value sufficiently close to it. 
\begin{theorem}
There exists $\mu > 0$ such that 
if $u(t) \in W$ is a solution of \eqref{tard} with $\Norm{u_0}{H^1}$ sufficiently small, then we have 
$$
\Norm{u(t)}{H^1} \leq C(u_0) e^{- \mu t }
$$
for all $t >0$, and 
for some constant $C(u_0)$ depending on $u_0$. 

Hence, if $\psi(t) \in V \cap \Sc$ is a solution of \eqref{CNGF} such that $\Norm{\psi(0)-\eta}{H^1}$ is small enough, then for all $t$
\ben
\Norm{\psi(t)-\eta}{L^2} \leq C e^{- \mu t}
\een
for some constant $C$ and $\mu$. 
\end{theorem}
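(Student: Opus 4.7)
The plan is to construct a Lyapunov function measuring the distance to the ground state and to exploit the coercivity of the Hessian $A$ to obtain a differential inequality of the form $\dot{L} \leq -2\mu L$. The natural choice is
$$L(u) := \Hc(u) - \Hc(0),$$
which by Proposition~\ref{prop:d2Hcal} and Taylor expansion satisfies, in a sufficiently small $H^1$-neighborhood of $0$, a two-sided estimate
$$c_1 \Norm{u}{H^1}^2 \leq L(u) \leq c_2 \Norm{u}{H^1}^2,$$
with constants $c_1,c_2>0$. The upper bound comes from $\Hc \in C^2$ with bounded Hessian, and the lower bound from the non-degeneracy \eqref{eq:d2Hcal} combined with a continuity argument to absorb the cubic remainder.

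Next, I would differentiate $L$ along the flow \eqref{tard}. Since $\partial_t u = -\nabla_u \Hc(u) + u\scal{u}{\nabla_u\Hc(u)}$, one gets directly
$$\frac{d}{dt}L(u) = \scal{\nabla_u\Hc(u)}{\partial_t u} = -\Norm{\nabla_u\Hc(u)}{L^2}^2 + \scal{u}{\nabla_u\Hc(u)}^2.$$
By Cauchy--Schwarz, $\scal{u}{\nabla_u\Hc(u)}^2 \leq \Norm{u}{L^2}^2 \Norm{\nabla_u\Hc(u)}{L^2}^2$, and for $u$ small enough we have $\Norm{u}{L^2}^2 \leq 1/2$, whence
$$\frac{d}{dt}L(u) \leq -\tfrac{1}{2}\Norm{\nabla_u\Hc(u)}{L^2}^2.$$
The key coercivity input is then $\Norm{\nabla_u \Hc(u)}{L^2} \geq c\Norm{u}{H^1}$ for small $u$: indeed, $\nabla_u \Hc(u) = Au + R(u)$ with $\Norm{R(u)}{L^2} \leq \Norm{R(u)}{H^1} \leq C\Norm{u}{H^1}^2$, and by \eqref{monique} one has $\Norm{Au}{L^2}\geq c\Norm{u}{H^1}$; absorbing the quadratic remainder by the linear part gives the claim.

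Combining these bounds yields $\frac{d}{dt}L(u) \leq -c_3 \Norm{u}{H^1}^2 \leq -2\mu L(u)$ with $2\mu := c_3/c_2$. By Gr\"onwall, $L(u(t)) \leq L(u_0) e^{-2\mu t}$, and then the lower bound on $L$ produces $\Norm{u(t)}{H^1} \leq C(u_0) e^{-\mu t}$. A point requiring care is that all the estimates above hold only in a fixed small ball; the monotonicity of $L$ itself keeps $\Norm{u(t)}{H^1}^2 \leq (c_2/c_1)\Norm{u_0}{H^1}^2$, so a continuity/bootstrap argument ensures the trajectory never leaves this ball, justifying the estimates for all $t>0$ and in particular global existence. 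For the second part of the statement, I would translate back via $\chi$: since $\psi(t)-\eta = r(u(t))\eta + u(t)$ with $|r(u)| = \mathcal{O}(\Norm{u}{L^2}^2)$ by \eqref{eq:ru}, we get $\Norm{\psi(t)-\eta}{H^1}\leq \Norm{u(t)}{H^1} + C\Norm{u(t)}{H^1}^2$, hence the exponential decay. The main obstacle is really the uniform lower bound $L(u) \geq c_1\Norm{u}{H^1}^2$ on a neighborhood (rather than just a pointwise Taylor statement at $0$); once this is secured, the rest is essentially a differential-inequality computation driven by the spectral gap of $A$.
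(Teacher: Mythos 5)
Your proof is correct, and it follows a genuinely different (though closely related) route from the paper's. The paper takes as Lyapunov functional the quadratic form $\scal{u(t)}{Au(t)}$, plugs the flow written as $\partial_t u = -Au + R(u)$ with $R(u)=\mathcal{O}(\Norm{u}{H^1}^2)$ into $\partial_t\scal{u}{Au} = -2\Norm{Au}{L^2}^2 + 2\scal{R(u)}{Au}$, and then must control the remainder pairing $\scal{R(u)}{Au}$ explicitly before closing a differential inequality of the form $\partial_t \scal{u}{Au} \leq -c\scal{u}{Au} + C\scal{u}{Au}^{3/2}$ via \eqref{monique} and a continuity argument. You instead use the exact energy $L(u)=\Hc(u)-\Hc(0)$ and exploit that \eqref{tard} is precisely the projected gradient flow of $\Hc$, so that $\frac{d}{dt}L = -\Norm{\nabla_u\Hc(u)}{L^2}^2 + \scal{u}{\nabla_u\Hc(u)}^2$ holds with no error terms; the tangential correction is then absorbed by Cauchy--Schwarz using only $\Norm{u}{L^2}^2\le 1/2$. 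What your approach buys: the dissipation identity is exact, $L$ is genuinely monotone decreasing (which makes your bootstrap $c_1\Norm{u(t)}{H^1}^2 \le L(u(t))\le L(u_0)\le c_2\Norm{u_0}{H^1}^2$ cleaner than the paper's continuity argument), and the smallness of $u$ enters only through the two-sided equivalence $c_1\Norm{u}{H^1}^2\le L(u)\le c_2\Norm{u}{H^1}^2$ and the lower bound $\Norm{\nabla_u\Hc(u)}{L^2}\ge c\Norm{u}{H^1}$ --- both of which follow from Proposition~\ref{prop:d2Hcal} and \eqref{monique} exactly as you indicate. What the paper's approach buys is that the quadratic form $\scal{u}{Au}$ transfers more directly to the time-discrete setting of the next section, where the same quantity is reused. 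One shared caveat, which you inherit from the paper rather than introduce: both $\Norm{\nabla_u\Hc(u)}{L^2}$ and the paper's $\Norm{Au}{L^2}$ involve $\Delta u$ and are a priori finite only for $u\in H^2$, so strictly speaking one should invoke parabolic smoothing of the flow (or work with the $H^{-1}$--$H^1$ duality pairing throughout) to justify the differential inequalities; since the paper glosses over the same point, this is not a gap relative to its standard of rigor.
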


\begin{proof}
As $A$ is symmetric, we calculate that 
$$
\partial_t \scal{u(t) }{Au(t)} = - 2\scal{Au(t) }{Au(t)} + 2 \scal{R(u(t))}{A u(t)}. 
$$
Using \eqref{monique}, 
and the fact that for all $u$ and $v$ in $H^1$, $\scal{u}{Av} \leq C \Norm{u}{H^1} \Norm{v}{H^1}$, we obtain 
$$
|\partial_t \scal{u(t) }{Au(t)}| \leq  - c \scal{u(t) }{Au(t)} + 2 C \Norm{R(u(t))}{H^1} \Norm{u(t)}{H^1}  
$$
for some positive constant $c$ and $C$. 
As we have by definition $\Norm{R(u(t))}{H^1} \leq \Norm{u(t)}{H^1}^2 \leq C \scal{u(t)}{A u(t)}$, 
we infer the estimate 
$$
|\partial_t \scal{u(t) }{Au(t)} |Ê\leq  - c\scal{u(t)}{A u(t)} +C\scal{u(t)}{A u(t)}^{3/2}. 
$$
If $\scal{u(t) }{Au(t)} \leq B^2$ at $ t= 0$, we check that by continuity argument, we have the rough estimate
$$
\scal{u(t) }{Au(t)} \leq B^2 e^{ - (c - C B) t},  
$$
The result easily follows by taking $B$ small enough and using \eqref{equivH1}. 
\end{proof}

\section{Convergence result}

We consider the scheme described in the introduction: from a function $\psi_n$ of $L^2$ norm $1$, close enough to $\eta$, we can write $\psi_n = (1 + r(u_n)) \eta + u_n$ with $u_n \in W$. We then define $\psi_{n+1}$ by the relations \eqref{poulguen} and \eqref{boston}. 
Note that as $\psi_{n+1}$ is of $L^2$ norm $1$, we can define $u_{n+1}$ by the formula 
$ \psi_{n+1} = (1 + r(u_{n+1})\eta + u_{n+1}$ if $\psi_{n+1}$ is close enough to $\Gamma$. The map $u_n \mapsto u_{n+1}$ satisfies the following relation:

\begin{lemma}
Let $R \leq \frac{1}{4}$ be given. There exists $\tau_0$ such that for all $\tau \leq \tau_0$
the numerical scheme \eqref{poulguen}-\eqref{boston} is well defined from $\mathcal{S}\cap \mathcal{U}(R)$ to $\mathcal{S}\cap \mathcal{U}(2R)$ and is equivalent to an application $u_n \mapsto u_{n+1}$ satisfying 
\begin{equation}
\label{wiener}
u_{n+1}  = u_n  +  \tau P_W \big(  \frac{1}{2}\Delta u_{n+1}
- \lambda u_n +   \eta^2   u_{n+1} +  2\eta^2 u_n     \big)  + \mathcal{O}(\tau \Norm{u_n, u_{n+1}}{H^1}^2), 
\end{equation}
which is a time discretization of the equation 
\begin{eqnarray*}
\partial_t u &=& P_W(  - \lambda u + \frac12\Delta u   + 3  \eta^2 u ) +  \mathcal{O}(\Norm{u}{H^1}^2)\\
&=&  - A u  +  \mathcal{O}(\Norm{u}{H^1}^2),
\end{eqnarray*}
corresponding to the normalized gradient flow system \eqref{CNGF}. 
\end{lemma}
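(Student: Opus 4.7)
I would first verify that the scheme is well-posed. Rewriting \eqref{poulguen} as $(I - \tau L_n)\psi_n^* = \psi_n$ with $L_n := \tfrac{1}{2}\Delta + |\psi_n|^2$, note that $\psi_n \in \mathcal{U}(R) \subset H^1 \hookrightarrow L^\infty$ in one space dimension, so $|\psi_n|^2$ is bounded in $L^\infty$ uniformly on $\mathcal{U}(R)$, and for $\tau$ small the bilinear form $\varphi \mapsto \scal{\varphi}{(I - \tau L_n)\varphi}$ is coercive on $H^1$. Hence $\psi_n^*$ exists uniquely in $H^1$ with $\Norm{\psi_n^* - \psi_n}{H^1} = \mathcal{O}(\tau)$, and $c_n := \Norm{\psi_n^*}{L^2} = 1 + \mathcal{O}(\tau)$ stays away from $0$, so \eqref{boston} makes sense and $\Norm{\psi_{n+1} - \psi_n}{H^1} = \mathcal{O}(\tau)$; choosing $\tau_0$ small enough that this is bounded by $R$ yields $\psi_{n+1} \in \mathcal{S}\cap \mathcal{U}(2R)$.

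Next, I would derive the explicit formula for $u_{n+1}$. Writing $\psi_n^* = c_n \psi_{n+1}$ and substituting in \eqref{poulguen} gives $c_n(I-\tau L_n)\psi_{n+1} = \psi_n$, and applying $P_W$ produces
\[
 u_{n+1} \;=\; u_n/c_n \;+\; \tau\, P_W L_n\psi_{n+1}.
\]
I would then expand $L_n\psi_{n+1}$ using $\psi_{n+1} = (1+r_{n+1})\eta + u_{n+1}$, the ground state identity $\tfrac{1}{2}\Delta\eta = \lambda\eta - \eta^3$, the estimate $|\psi_n|^2 = \eta^2 + 2\eta u_n + \mathcal{O}(\Norm{u_n}{H^1}^2)$ (itself coming from $r_n = \mathcal{O}(\Norm{u_n}{H^1}^2)$ in \eqref{eq:ru} and the algebra property \eqref{gadin}), together with $r_{n+1} = \mathcal{O}(\Norm{u_{n+1}}{H^1}^2)$, to obtain
\[
 L_n\psi_{n+1} \;=\; \lambda\eta + \tfrac{1}{2}\Delta u_{n+1} + \eta^2 u_{n+1} + 2\eta^2 u_n + \mathcal{O}(\Norm{u_n,u_{n+1}}{H^1}^2).
\]
To surface the $-\lambda u_n$ term, I would pair $(I-\tau L_n)\psi_{n+1} = \psi_n/c_n$ with $\eta$: integration by parts gives $\scal{\tfrac{1}{2}\Delta u_{n+1}}{\eta} = -\scal{u_{n+1}}{\eta^3}$, exactly cancelling $\scal{\eta^2 u_{n+1}}{\eta} = \scal{u_{n+1}}{\eta^3}$, so that $\scal{L_n\psi_{n+1}}{\eta} = \lambda + \mathcal{O}(\Norm{u_n}{H^1})$. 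Combined with $\scal{\psi_n}{\eta} = 1 + r_n$, this gives $1/c_n = 1 - \tau\lambda + \mathcal{O}(\tau\Norm{u_n,u_{n+1}}{H^1} + \Norm{u_n,u_{n+1}}{H^1}^2)$; multiplying by $u_n$ and using $P_W u_n = u_n$ contributes $-\tau\lambda u_n$ inside $P_W$, and \eqref{wiener} follows by collecting all terms.

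The hard part is bounding the remainder by $\mathcal{O}(\tau\Norm{u_n,u_{n+1}}{H^1}^2)$ rather than the naive $\mathcal{O}(\tau\Norm{u_n,u_{n+1}}{H^1}^2 + \Norm{u_n,u_{n+1}}{H^1}^3)$ that the above calculation produces: the $\mathcal{O}(\Norm{u}{H^1}^2)$ correction in $1/c_n$, multiplied by $u_n$, leaves a pure cubic error. I would absorb this by a bootstrap: the preliminary estimate already yields $\Norm{u_{n+1}-u_n}{H^1} = \mathcal{O}(\tau\Norm{u_n,u_{n+1}}{H^1})$ (since $-\tau\lambda u_n + \tau P_W L_n\psi_{n+1}$ is of that order), whence $\Norm{u_{n+1}}{L^2}^2 - \Norm{u_n}{L^2}^2 = \mathcal{O}(\tau\Norm{u_n,u_{n+1}}{H^1}^2)$ and thus $r_{n+1} - r_n = \mathcal{O}(\tau\Norm{u_n,u_{n+1}}{H^1}^2)$, improving the offending terms to the required order. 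Similar cross-term improvements, tracked systematically through every piece of the expansion, dispose of the remaining cubic contributions.
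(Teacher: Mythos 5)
Your proposal follows essentially the same route as the paper: expand $\widehat{\nabla H}(\psi_n,\psi_n^*)$ in the coordinates $(r,u)$, use the ground-state identity \eqref{etacube} so that the zeroth-order term becomes $\lambda\eta(1+r^*)$, which lies in $\R\eta$ and is annihilated by $P_W$ (the key structural point, cf.\ Remark \ref{remi}), determine the normalization constant by pairing with $\eta$, and project onto $W$. The cancellation $\scal{\tfrac12\Delta u_{n+1}}{\eta}+\scal{\eta^2 u_{n+1}}{\eta}=0$ you exploit is exactly the mechanism by which the paper's $P_\eta$-projection closes. The one organizational difference is that the paper computes $\Norm{\psi_n^*}{L^2}^{2}=(1+r_n^*)^2+\Norm{u_n^*}{L^2}^2$ directly and invokes the exact constraint $(1+r_n)^2+\Norm{u_n}{L^2}^2=1$, so every correction to $1$ automatically carries a factor $\tau$ and no bootstrap on $r_{n+1}-r_n$ is needed; your route via $\scal{(I-\tau L_n)\psi_{n+1}}{\eta}$ manufactures the spurious $r_{n+1}-r_n$ term that you must then argue away.

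Two of your intermediate claims are false as literally stated, both for the same reason: $\tau\Delta\psi_n^*$ (and $\tau\Delta u_{n+1}$) is $\mathcal{O}(\tau)$ in $H^{-1}$ but not in $H^1$, since $(I-\tfrac{\tau}{2}\Delta)^{-1}$ gains two derivatives only at the price of $\tau^{-1}$. Hence neither $\Norm{\psi_n^*-\psi_n}{H^1}=\mathcal{O}(\tau)$ nor $\Norm{u_{n+1}-u_n}{H^1}=\mathcal{O}(\tau\Norm{u_n,u_{n+1}}{H^1})$ holds for general $H^1$ data. Both uses are repairable. For the mapping into $\mathcal{S}\cap\mathcal{U}(2R)$ one should instead test the elliptic equation for $u_n^*$ with $u_n^*$ and with $-\Delta u_n^*$ to obtain $\Norm{u_n^*}{H^1}\leq(1+C\tau)\Norm{u_n}{H^1}$. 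For the bootstrap, what is actually needed is only $\Norm{u_{n+1}}{L^2}^2-\Norm{u_n}{L^2}^2=\mathcal{O}(\tau\Norm{u_n,u_{n+1}}{H^1}^2)$, which follows by pairing the recursion with $u_{n+1}+u_n$ and integrating the Laplacian by parts --- this is precisely the weak-form estimate the paper uses when it writes $\Norm{u_n^*}{L^2}^2=\Norm{u_n}{L^2}^2+\mathcal{O}(\tau\Norm{u_n,u_n^*}{H^1}^2)$. With these two repairs (or by switching to the paper's direct computation of the normalization constant) your argument is complete.
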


\begin{proof}
Note that with the assumption on $R$, we can assume that $\Norm{u_n}{H^1} \leq 1/2$. 
Let us calculate $(r_n^*,u_n^*)$ such that 
$$
\psi_n^* = \chi(r_n^*,u_n^*) = ( 1+ r_n^*) \eta + u_n^*, 
$$
with $u_n^* \in W$, that is $\scal{\eta}{u_n^*} = 0$. As $r_n = r(u_n) = \mathcal{O}(\Norm{u_n}{H^1}^2)$, we have $|\psi_n|^2 =  \eta^2 + 2u_n \eta + \mathcal{O}(\Norm{u_n}{H^1}^2)$. 
Let us assume that $|r_n^*| \leq 1$, we can expand the Hamiltonian term 
\begin{multline*}
 -\widehat{\nabla H}(\psi_n,\psi_n^*) = \frac12\Delta\psi_n^* + |\psi_n|^2\psi_n^{*}\\
 = ( 1+ r_n^*) ( -\eta^3 + \lambda \eta)  + \frac{1}{2}\Delta u_n^* +  \eta^2 u_n^* + (\eta^3 + 2 u_n \eta^2) ( 1+ r_n^*)   + \mathcal{O}(\Norm{u_n,u_n^*}{H^1}^2), 
\end{multline*}
and hence we find
\begin{equation}
\label{li}
-\widehat{\nabla H}(\psi_n,\psi_n^*) = \lambda \eta ( 1+ r_n^*)   + \frac{1}{2}\Delta u_n^*
 + \eta^2   u_n^*
+  2 u_n \eta^2 ( 1+ r_n^*)  + \mathcal{O}(\Norm{u_n, u_n^*}{H^1}^2). 
\end{equation}
By taking the projection $P_\eta$ of the equation \eqref{poulguen}, we obtain 
$$
1 + r_n^* = 1+  r_n  + \tau ( 1+ r_n^*)\lambda 
+ 2 \tau \scal{u_n}{ \eta^3} ( 1+ r_n^*)  + \mathcal{O}(\tau \Norm{u_n, u_n^*}{H^1}^2). 
$$
\begin{remark}
\label{remi}
At this stage, the same calculations for the implicit explicit of fully implicit schemes would yield a term depending on $ \eta^3$ in \eqref{li} which is not in $W$ nor in $\R \eta$. After taking a projection, this term would not vanish, while the ``constant" term  $\lambda \eta ( 1+ r_n^*)$ is here orthogonal to $W$ (which reflects the fact that the ground state is exactly integrated by the linearly implicit scheme). 
\end{remark}
We deduce that for $\tau \leq \tau_0$ small enough with respect to $R$ (and $\lambda$) 
\begin{eqnarray*}
1 + r_n^* &=& \frac{1 + r_n}{1 - \tau \lambda - 2 \tau \scal{u_n}{ \eta^3}}+ \mathcal{O}(\tau \Norm{u_n, u_n^*}{H^1}^2) \\
&=&   \frac1{1 - \tau \lambda}{}\big( 1 + r_n + \frac{2 \tau}{1 - \tau \lambda} \scal{u_n}{ \eta^3} \big) +  \mathcal{O}(\tau \Norm{u_n, u_n^*}{H^1}^2), 
\end{eqnarray*}
and by taking the projection on $W$, we obtain 
\begin{equation}
\label{kreisleriana}
u_n^* = u_n + \tau P_W \big(  \frac{1}{2}\Delta u_n^*
 + \eta^2   u_n^*
+  \frac{2}{1 - \tau \lambda} u_n \eta^2  \Big)+ \mathcal{O}(\tau \Norm{u_n, u_n^*}{H^1}^2).  
\end{equation}
Let us now calculate and asymptotic expansion of the $L^2$ norm of $\psi_n^*$, 
$$
\Norm{\psi_n^*}{L^2}^2 =  (1 + r_n^*)^2 + \Norm{u_n^*}{L^2}^2. 
$$
We have 
\begin{eqnarray*}
( 1 + r_n^*)^2 &=&   \left( \frac1{1 - \tau \lambda}{}\right)^2\big( 1 + r_n + \frac{2 \tau}{1 - \tau \lambda} \scal{u_n}{ \eta^3} \big)^2+  \mathcal{O}(\tau \Norm{u_n, u_n^*}{H^1}^2) \\
&=&\left( \frac1{1 - \tau \lambda}{}\right)^2\big( (1 + r_n)^2+ \frac{4 \tau}{1 - \tau \lambda} \scal{u_n}{ \eta^3} \big)+  \mathcal{O}(\tau \Norm{u_n, u_n^*}{H^1}^2) 
\end{eqnarray*}
and
$$
\Norm{u_n^*}{L^2}^2 = \Norm{u_n}{L^2}^2  + \mathcal{O}(\tau \Norm{u_n, u_n^*}{H^1}^2).  
$$
Hence we calculate that 
\begin{equation*}
\begin{split}
\Norm{&\psi_n^*}{L^2}^{-1}=\Big( ( 1 + r_n^*)^2 + \Norm{u_n^*}{L^2}^2 \Big)^{-1/2}\\
&= (1 - \tau \lambda) \left( (1 + r_n)^2+ \frac{4 \tau}{1 - \tau \lambda} \scal{u_n}{ \eta^3}  + \Norm{u_n}{L^2}^2 +\mathcal{O}(\tau \Norm{u_n, u_n^*}{H^1}^2)\right)^{-1/2}
\\
&= (1 - \tau \lambda) \left(1 + \frac{1}{1 - \tau \lambda}4 \tau \scal{u_n}{ \eta^3}   +\mathcal{O}(\tau \Norm{u_n, u_n^*}{H^1}^2)\right)^{-1/2}
\\
&=  1 -\tau \lambda - 2 \tau \scal{u_n}{ \eta^3}   +\mathcal{O}(\tau \Norm{u_n, u_n^*}{H^1}^2). 
\end{split}
\end{equation*}
Let us rewrite \eqref{kreisleriana} as 
$$
u_n^* - \tau P_W \big(  \frac{1}{2}\Delta u_n^*
 + \eta^2   u_n^* \big) = u_n + \tau P_W \big(  \frac{2}{1 - \tau \lambda} u_n \eta^2  \Big)+ \mathcal{O}(\tau \Norm{u_n, u_n^*}{H^1}^2) . 
$$
Hence by multiplying by $\Norm{\psi_n^*}{L^2}^{-1}$ we obtain 
\begin{multline*}
u_{n+1} - \tau P_W \big(  \frac{1}{2}\Delta u_{n+1}
 + \eta^2   u_{n+1} \big) \\
 = u_n  -\tau \lambda u_n + \tau P_W ( 2 u_n \eta^2)   + \mathcal{O}(\tau \Norm{u_n, u_{n+1}}{H^1}^2) , 
\end{multline*}
which yields the result. 
\end{proof}
We can now prove the main result of this section: 
\begin{theorem}
There exists constants $R$,  $\tau_0$, $r$ and $C$ such that if $\psi_0 \in \mathcal{S} \cap \mathcal{U}(R)$, that for all $\tau \leq \tau_0$ 
the solution of the numerical scheme \eqref{poulguen}-\eqref{boston} satisfies 
\begin{equation}
\label{violin}
\forall\,n \quad  \Norm{\psi_n - \eta}{H^1} \leq C e^{- r n \tau}. 
\end{equation}
\end{theorem}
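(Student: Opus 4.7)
The plan is to establish geometric decay of the quadratic Lyapunov function $\Phi_n := \scal{u_n}{A u_n}$, which is equivalent to $\Norm{u_n}{H^1}^2$ by \eqref{equivH1}. Since $\psi_n - \eta = r(u_n) \eta + u_n$ with $r(u_n) = \mathcal{O}(\Norm{u_n}{L^2}^2)$ from \eqref{eq:ru}, this decay transfers directly to $\Norm{\psi_n - \eta}{H^1}$, yielding \eqref{violin}. To exploit the structure of the scheme, introduce $\mathcal{L}_1 u := P_W\big(\tfrac{1}{2}\Delta u + \eta^2 u\big)$ and $\mathcal{L}_2 u := P_W\big(-\lambda u + 2 \eta^2 u\big)$, which satisfy $\mathcal{L}_1 + \mathcal{L}_2 = -A$. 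The recursion \eqref{wiener} then reads
\begin{equation*}
(I - \tau \mathcal{L}_1) u_{n+1} = (I + \tau \mathcal{L}_2) u_n + R_n, \qquad R_n = \mathcal{O}(\tau \Norm{u_n, u_{n+1}}{H^1}^2).
\end{equation*}
The identity $\scal{u}{(I - \tau \mathcal{L}_1) u} = \Norm{u}{L^2}^2 + \tfrac{\tau}{2}\Norm{\partial_x u}{L^2}^2 - \tau \scal{u}{\eta^2 u}$ shows coercivity of $I - \tau \mathcal{L}_1$ on $W \cap H^1$ for $\tau \leq \tau_0$ small (using $\Norm{\eta}{L^\infty}^2 = 1/16$), so Lax-Milgram provides unique solvability of $u_{n+1}$ and the a priori bound $\Norm{u_{n+1} - u_n}{H^1} \leq C \tau \Norm{u_n}{H^1}$, ensuring by induction that the iterates remain in $\mathcal{S} \cap \mathcal{U}(2R)$.

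The heart of the argument is an energy estimate. Setting $\bar u := (u_{n+1} + u_n)/2$ and $\delta u := u_{n+1} - u_n$, a direct computation gives $\mathcal{L}_1 u_{n+1} + \mathcal{L}_2 u_n = -A \bar u + \tfrac{1}{2}(\mathcal{L}_1 - \mathcal{L}_2)\delta u$. Pairing the scheme with $A u_{n+1}$ and $A u_n$, summing, and using the $L^2$-symmetry of $A$ (so that $2\scal{A\bar u}{\delta u} = \Phi_{n+1} - \Phi_n$) yields
\begin{equation*}
\Phi_{n+1} - \Phi_n = -2\tau \Norm{A \bar u}{L^2}^2 + \tau \scal{A \bar u}{(\mathcal{L}_1 - \mathcal{L}_2)\delta u} + 2 \scal{A \bar u}{R_n}.
\end{equation*}
Applying \eqref{monique} to $\bar u = u_n + \mathcal{O}(\tau \Norm{u_n}{H^1})$ yields $\Norm{A\bar u}{L^2}^2 \geq c_1 \Phi_n$ for $\tau$ small. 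The remainder term is controlled via Young's inequality by $\tfrac{\tau}{4}\Norm{A\bar u}{L^2}^2 + C \tau \Phi_n^{3/2}$, and the cross term, of size $\tau^2$ since $\Norm{\delta u}{H^1} \leq C \tau \Norm{u_n}{H^1}$, is likewise absorbed into the leading negative term. The resulting inequality
\begin{equation*}
\Phi_{n+1} \leq (1 - c_2 \tau) \Phi_n + C \tau \Phi_n^{3/2}
\end{equation*}
combined with a bootstrap argument choosing $R$ small enough that $C \sqrt{\Phi_0} \leq c_2 / 2$ yields $\Phi_n \leq \Phi_0 (1 - c_2 \tau / 2)^n$ by induction, whence \eqref{violin}.

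The main obstacle lies in controlling the cross term $\tau \scal{A\bar u}{(\mathcal{L}_1 - \mathcal{L}_2)\delta u}$, which reflects the asymmetry of the linearly implicit scheme (for a symmetric scheme such as the implicit midpoint rule one would have $\mathcal{L}_1 = -\mathcal{L}_2$ and this term would vanish). Since $(\mathcal{L}_1 - \mathcal{L}_2) u = P_W(\tfrac{1}{2}\Delta u + \lambda u - \eta^2 u)$ contains a Laplacian factor, a direct $L^2$-$L^2$ estimate does not close; one must either exploit the elliptic regularization of $(I - \tau \mathcal{L}_1)^{-1}$ to gain $H^2$ regularity on $\delta u$ and integrate by parts to transfer $\Delta$ onto $A \bar u$, or use the identity $(I + \tfrac{\tau}{2}(\mathcal{L}_2 - \mathcal{L}_1)) \delta u = -\tau A \bar u + R_n$ to re-express $(\mathcal{L}_1 - \mathcal{L}_2)\delta u$ algebraically in terms of $A \bar u$, $\delta u$, and $R_n$, reducing the cross term to bounded quantities.
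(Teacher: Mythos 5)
Your overall strategy -- a discrete energy estimate on the quadratic form $\scal{u_n}{Au_n}$, closed by a bootstrap on the smallness of the iterates -- is the same as the paper's, and you correctly identify the splitting of the recursion into an implicit part $\mathcal{L}_1$ and an explicit part $\mathcal{L}_2$ with $\mathcal{L}_1+\mathcal{L}_2=-A$. However, the proof as written has a genuine gap precisely at the point you yourself flag as ``the main obstacle'': the cross term $\tau\scal{A\bar u}{(\mathcal{L}_1-\mathcal{L}_2)\delta u}$ is not controlled, and neither of your two proposed remedies closes it. The second remedy is circular: from $(I+\tfrac{\tau}{2}(\mathcal{L}_2-\mathcal{L}_1))\delta u=-\tau A\bar u+R_n$ one gets $(\mathcal{L}_1-\mathcal{L}_2)\delta u=\tfrac{2}{\tau}(\delta u+\tau A\bar u-R_n)$, and substituting this back into your identity for $\Phi_{n+1}-\Phi_n$ returns the tautology $\Phi_{n+1}-\Phi_n=2\scal{A\bar u}{\delta u}$. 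The first remedy requires uniform $H^2$ bounds on the iterates that are not established. Moreover, the a priori bound $\Norm{\delta u}{H^1}\leq C\tau\Norm{u_n}{H^1}$, on which your claim that the cross term is of size $\tau^2$ rests, is not valid: one has $(I-\tau\mathcal{L}_1)\delta u=-\tau Au_n+R_n$ with $Au_n$ only in $H^{-1}$, and the resolvent $(I-\tau\mathcal{L}_1)^{-1}$ maps $H^{-1}$ to $H^1$ only at the price of a factor $1/\tau$ (its $H^1$-coercivity constant degenerates like $\tau$), so the honest bound is $\Norm{\delta u}{H^1}\leq C\Norm{u_n}{H^1}$. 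A related issue is that your identity already uses $\Norm{A\bar u}{L^2}$, i.e. $Au_n\in L^2$, which is not guaranteed for $u_0\in H^1$.

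The paper resolves exactly this difficulty by a symmetrization step that you are missing: writing the scheme as $u_{n+1}=u_n-\tau(L+B)u_{n+1}+\tau B(u_n-u_{n+1})+\mathcal{O}(\tau\Norm{u_n,u_{n+1}}{H^1}^2)$ with $B$ the \emph{bounded} symmetric part, one inverts $I+\tau B$ and sets $v_n=(I+\tau B)^{1/2}u_n$. In the new variable the recursion becomes a \emph{purely implicit} step $v_{n+1}=(I+\tau A_\tau)^{-1}(v_n+\tau w_n)$ for a symmetric, uniformly coercive operator $A_\tau=(I+\tau B)^{-1/2}(L+B)(I+\tau B)^{-1/2}$; the cross term then disappears entirely, the energy $\scal{A_\tau v_{n+1}}{v_{n+1}}$ is estimated by expanding $\scal{A_\tau(I+\tau A_\tau)v_{n+1}}{(I+\tau A_\tau)v_{n+1}}\geq(1+2c\tau)\scal{A_\tau v_{n+1}}{v_{n+1}}$, and the regularity issue evaporates because $A_\tau v_{n+1}=\tau^{-1}(v_n+\tau w_n-v_{n+1})\in L^2$ automatically. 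The key point is that the asymmetry of the linearly implicit scheme lies only in the bounded operator $B$, so it can be absorbed by a bounded, invertible change of unknown; without this (or an equivalent device) your energy inequality does not close.
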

\begin{proof}Note that with the previous notations, we have 
$\Norm{\psi_n - \eta}{H^1} = \mathcal{O}(\Norm{u_n}{H^1})$. 
We will prove the following result, which implies \eqref{violin} after a slight change of constants:  
There exists $B$, $\tau_0$, $r$ and $C$ such that if $\Norm{u_0}{H^1} \leq B$ and $\tau \leq \tau_0$, then every sequence $(u_n)_{n \geq 0}$ defined by \eqref{wiener} satisfies the estimate
$$
\forall\,n \quad  \Norm{u_n}{H^1} \leq C e^{- r n \tau}. 
$$
We rewrite the system \eqref{wiener} as 
$$
u_{n+1}  = u_n  -  \tau (L u_{n+1} + B u_n) + \mathcal{O}(\tau \Norm{u_n, u_{n+1}}{H^1}^2), 
$$
with 
$$
L u = P_W(\textstyle  \frac12\Delta u  + \eta^2 u) \quad \mbox{and}  \quad B u = P_W(  - \lambda u + 2   \eta^2 u ). 
$$
We know that $A = L + B$ is coercive in $H^1$. Note moreover that $B$ is a symmetric bounded operator in $W$.  
We can write 
$$
u_{n+1}  = u_n  -  \tau (L + B) u_{n+1} +\tau  B (u_n - u_{n+1}) + \mathcal{O}(\tau \Norm{u_n, u_{n+1}}{H^1}^2). 
$$
Now for $\tau$ sufficiently small, the operator $I + \tau B$ is symmetric, invertible, positive and bounded from $W$ to itself. Hence we can rewrite the numerical scheme as 
$$
u_{n+1}  = u_n  -  \tau ( I + \tau B)^{-1}(L + B) u_{n+1} + \mathcal{O}(\tau \Norm{u_n, u_{n+1}}{H^1}^2). 
$$
Setting 
$v_n = ( I + \tau B)^{1/2} u_n$, 
we obtain the relation
$$
v_{n+1}  = v_n  -  \tau ( I + \tau B)^{-1/2}(L + B)( I + \tau B)^{-1/2} v_{n+1} + \mathcal{O}(\tau \Norm{v_n, v_n^*}{H^1}^2). 
$$
In this new variable $v_n$, we thus can write the induction relation 
\begin{equation}
\label{leon}
v_{n+1}  = v_n  -  \tau A_\tau  v_{n+1} + \tau w_n 
\end{equation}
with $ A_\tau = ( I + \tau B)^{-1/2}(L + B)( I + \tau B)^{-1/2}$, 
\begin{equation}
\label{beaulieu}
w_n =  \mathcal{O}(\Norm{v_n, v_{n+1}}{H^1}^2) 
\quad\mbox{and}\quad 
 c \Norm{v}{H^1}^2 \leq \scal{A_\tau v}{v}_{L^2} \leq C \Norm{v}{H^1}^2, 
\end{equation}
for some constants $c$ and $C$ independent of $\tau$. 
Note also that the inequality \eqref{monique} holds for the operators $A_\tau$ uniformly in $\tau \leq \tau_0$ sufficiently small. 
From \eqref{leon}, we can write
\begin{multline*}
 \scal{A_\tau v_n + \tau A_\tau w_n }{v_n + \tau w_n}_{L^2}
=
\scal{A_\tau v_{n+1} + \tau A_\tau^2 v_{n+1}}{ v_{n+1} + \tau A_\tau v_{n+1} }_{L^2} \\
= \scal{A_\tau v_{n+1}}{v_{n+1}}  +2  \tau \scal{A_\tau  v_{n+1}}{A_\tau v_{n+1}}_{L^2}   + \tau^2 \scal{A_\tau^2 v_{n+1}}{A_\tau v_{n+1}}_{L^2}
\\
\geq ( 1 + 2 c \tau) \scal{A_\tau v_{n+1}}{v_{n+1}},
\end{multline*}
using \eqref{monique} and the fact that $\scal{A_\tau^2 v_{n+1}}{A_\tau v_{n+1}}_{L^2} \geq 0$. 

On the other hand we have 
$$
\scal{A_\tau v_n + \tau A_\tau w_n }{v_n + \tau w_n}_{L^2} =  \scal{A_\tau v_n}{v_n}  
+ 2 \tau \scal{A_\tau v_n}{w_n} + \tau^2 \scal{A_\tau w_n}{w_n}.
$$
But using an integration by part in the unbounded part of $A_\tau$, we have for all $\tau \leq \tau_0$ sufficiently small, and for any $v,w$ in $H^1$
$$
|\scal{A_\tau v}{w}| \leq  C \Norm{v}{H^1}\Norm{w}{H^1}. 
$$
Using the estimate on $w_n$, we thus see that if $\scal{A_\tau v_n}{v_n} $ and $\scal{A_\tau v_{n+1}}{v_{n+1}}$ are bounded by $B^2$, we have 
$$
\scal{A_\tau v_n + \tau A_\tau w_n }{v_n + \tau w_n}_{L^2} \leq  (1  + C \tau B) \scal{A_\tau v_n}{v_n}  + C \tau B \scal {A_\tau v_{n+1}}{v_{n+1}}.
$$
The previous inequalities imply: 
$$
\scal{A_\tau v_{n+1}}{v_{n+1}}_{L^2} \leq \frac{1 + C B \tau}{1 + c \tau - C B \tau}\scal{A_\tau v_{n}}{v_{n}}_{L^2}
$$
as long as $\scal{A_\tau v_n}{v_n} \leq B^2$ and $\scal{A_\tau v_{n+1}}{v_{n+1}} \leq B^2$. 
If $B$ is small enough (independently of $\tau$) we have 
$$
 \frac{1 + C B \tau}{1 + c \tau - C B \tau} \leq e^{- a \tau}
 $$
 for some constant $a$ independent of $\tau$. 
 This implies that 
 $$
\scal{A_\tau v_{n}}{v_{n}}_{L^2} \leq B^2 e^{- a n \tau},
 $$
for all $n$, by an induction argument. This shows the result. 
 \end{proof}

\begin{remark}
\label{rutgers}
If we choose another discretization - implicit explicit or fully implicit, see \eqref{otherd} - then as explained in Remark \ref{remi} the term in $\eta^3$ do not disapear in the equation, and after some manipulations we end up with a recursion formula of the form 
$$
v_{n+1} = v_n + \tau^2 g - \tau A_\tau v_{n+1} + \tau \rho_n
$$
with $\rho_n = \mathcal{O}( \Norm{v_n,v_{n+1}}{H^1}^2 ) $ and $g \in W$ a non zero function. It is relatively easy to prove that in this case, $v_n$ converges towards a function $v_\tau^\infty = \tau A_{\tau}^{-1} g + \mathcal{O}(\tau^2)$. Indeed, the previous equation if a discretization of the modified problem
$$
\partial_t v = \tau g - A_\tau v + B(v) 
$$
whose solution exponentially converges towards the solution of the problem
$$
 \tau g - A_\tau v + B(v)  = 0  
$$
which exists by standard implicit function theorem in $H^1$. This shows that in this case, $\psi_n$ converges exponentially in $H^1$  towards a modified soliton $\eta_{\tau} = \eta + \mathcal{O}(\tau)$. As the result is weaker than for the linearly implicit scheme, we do not give more mathematical details. 
\end{remark}

\section{Fully discrete case}

The goal of this last section is to show that the previous results carry over to fully discrete systems. This fact relies essentially on the results in \cite{BP10} and \cite{BFG}: After discretization of the previous system by finite difference method, there exists a discrete ground state $\eta_h$ minimizing a discrete convex functional $\Hc_h$ which is an approximation of the exact functional $\Hc$ defined above. Here  $h$ denotes the space discretization parameter and $\eta_h$ is close to $\eta$ with an error of order $h$. Moreover,  the same holds true after Dirichlet cut-off upon adding an exponentially decreasing error term, and it can be proven that the discrete functionals satisfy the same estimates as the continuous ones, uniformly with respect to the discretization parameters.

Having fixed a positive parameter $h$ we discretize in space by
substituting the sequence
$\psi^{\l}\simeq\psi(h \l)$, $\l\in\Z$ for the function $\psi(x)$,  and the second order operator of finite difference
$\Delta_h$ defined by
\begin{equation}
\label{deltamu}
(\Delta_h \psi)^{\l}:=\frac{\psi^{\l+1}+\psi^{\l-1}-2\psi^{\l}}{h^2}, 
\end{equation}
for the Laplace operator   $\partial_{xx}$. We also impose Dirichlet boudary conditions for $|j| \geq K+1$, and 
the parabolic equation \eqref{telephone} then becomes 
\begin{equation}
\left\{
\begin{array}{rcl}
\label{dnls} \displaystyle \frac{\dd}{\dd t}
\psi^\l &=& \displaystyle\frac{1}{2 h^2}(\psi^{\l+1}+\psi^{\l-1}-2\psi^\l)+ 
|\psi^\l|^2\psi^\l, \quad \ell \in\Z, \quad |\ell| \leq K,\\[2ex]
\psi^\ell &=& 0,\quad |\ell| \geq K+1,
\end{array}
\right.
\end{equation}
where $t \mapsto \psi(t) = (\psi^\ell(t))_{\ell \in \Z}$ is an application from $\R$ to $\R^\Z$.  With this equation is associated a Hamiltonian function and a discrete $L^2$ norm given by 
\begin{equation}
\label{eq:discrham}
H_h(\psi)=h\sum_{j\in \Z}\left[\left|\frac{\psi^j
-\psi^{j-1}}{h}\right|^2
-\frac{|\psi^j|^4} {2}\right]
\quad \mbox{and}\quad
N_h(\psi)=h\sum_{j\in \Z} |\psi^j|^2. 
\end{equation}
The discrete space of functions is 
$$
V_{h,K} = \{ \psi^j \in \C^{\Z}\, | \, \psi^j = \psi^{-j}, \quad \psi^j = 0 \quad \mbox{for}\quad j > K\}
$$
equipped with the discrete $H^1$ norm 
\begin{equation}
\label{h1}
\Norm{\psi}{h}^2 = 2h \sum_{j\in\Z} \frac{|\psi^{j+1}-\psi^j |^2}{h^2}+h \sum_{j\in\Z} |\psi^j|^2, 
\end{equation}
and we also define the scalar product 
$$
\langle \psi, \varphi\rangle_h := h \sum_{j \in \Z} \psi^j \varphi^j. 
$$
Following \cite{BFG,BP10}, we identify $V_h$ with a finite element subspace of
$H^1(\R)$. More precisely,  defining the function $s:\R \to \R$ by 
\begin{equation}
\label{fe.1}
s(x)=
\begin{cases}
0\qquad\qquad  &{\rm if}\quad |x|>1,\\
x+1\quad  &{\rm if}\quad   -1\leq x\leq 0,\\
- x+1\quad  &{\rm if}\quad  0\leq x\leq 1,
\end{cases}
\end{equation}
 the identification is done through the map $i_h: V_h \to H^1(\R)$ defined by 
\begin{equation}
\label{eq:ih}
\left\{\psi_j\right\}_{j\in\Z}\mapsto (i_h\psi)(x) := \sum_{j\in\Z} \psi_j s \left(\frac{x}{h}-j\right) .
\end{equation}
The main results in \cite{BFG} and \cite{BP10} can expressed as follows: 
\begin{theorem}
For $h$ sufficiently small and $K$ sufficiently large, 
there exists a unique real minimizer $\eta_{h,K} = (\eta_{h,k}^j)_{j \in \Z} \in V_{h,K}$ of the functional $H_h(\psi)$ over the set
$$
\{Ê\psi = (\psi^j)_{j \in \Z} \in V_{h,K}, \quad N_h(\psi) = 1\,\}, 
$$
equipped with the norm \eqref{h1}. 
Moreover, $\eta_{h,K}$ satisfies the equation 
\begin{eqnarray}
\label{zglob}
\textstyle\frac12 (\Delta_h \eta_{h,K})^\ell + |\eta_{h,K}^\ell|^2 \eta_{h,K}^\ell &=& \lambda_h \eta_{h,K}^\ell, \quad \ell = -K,\ldots,K,\\[2ex]
\eta_{h,K}^\ell &=& 0, \quad |\ell| \geq K+1, \nonumber
\end{eqnarray}
for some $\lambda_h > 0$. 
Moreover, we can define a local coordinate system $\psi = \chi_h(r,u) = (1 + r) \eta_{h,K} + u$ in a vicinity of $\eta_{h,K}$ in $V_{h,K}$ with $u \in W_h := \{Ê\, u \in V_{h,K},\quad \scal{u}{\eta_{h,K}}_h = 0\}$, and such that if $r_h(u)$ is defined by the implicit relation $N_h( \chi_h(r_h(u),u))) = 1$, then the functional $\Hc_{h}(u) = H_h(\chi_h(r_h(u),u)))$ has a unique non degenerate minimum in $u = 0$ in $V_{h,K}$ and satisfies a convexity estimate of the form \eqref{eq:d2Hcal} uniformly in $h$ and $K$. 

Finally, the discrete ground state $\eta_{h,K}$ is close to the continuous one in the sense that 
\begin{equation}
\Norm{i_h \eta_{h,K} - \eta}{H^1} \leq C \Big( h + \frac{1}{h^2} e^{- C_1 hK } \Big), 
\end{equation}
where $C$ and $C_1$ do not depend on $h$, and where $i_h$ is the embedding \eqref{eq:ih}. 
\end{theorem}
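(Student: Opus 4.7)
My plan is to transpose the continuous analysis of Section~2 to the fully discrete setting, treating $V_{h,K}$ as a finite-dimensional analogue of $V$ and invoking the main results of \cite{BFG,BP10} as the essential technical input. Existence of $\eta_{h,K}$ comes first. Since $V_{h,K}$ is finite dimensional of dimension of order $K$, the constraint set $\{\psi\in V_{h,K}\,|\,N_h(\psi)=1\}$ is a compact sphere, $H_h$ is smooth on it, and a discrete Gagliardo--Nirenberg inequality uniform in $h$ (as available in \cite{BP10}) shows that $H_h$ is bounded below on this set. Restricting to real symmetric functions, a minimizer $\eta_{h,K}$ thus exists by compactness, and \eqref{zglob} is the corresponding Lagrange multiplier relation. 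Positivity of $\lambda_h$ would be obtained by pairing \eqref{zglob} with $\eta_{h,K}$ combined with the closeness of $\eta_{h,K}$ to the continuous minimizer (for which $\lambda>0$) once the error estimate below is in place.

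Next I would set up the local coordinates $\psi = (1+r)\eta_{h,K} + u$ with $u \in W_h$, define $r_h(u) = -1 + \sqrt{1-\scal{u}{u}_h}$ exactly as in \eqref{eq:ru}, and Taylor-expand $\Hc_h(u) = H_h(\chi_h(r_h(u),u))$ at $u=0$ following the calculations leading to \eqref{adele}. This yields $d^2\Hc_h(0)(v,v) = \scal{v}{A_h v}_h$ with $A_h v := P_{W_h}\bigl(\lambda_h v - \tfrac12 \Delta_h v - 3 \eta_{h,K}^2 v\bigr)$. The main obstacle is the uniform coercivity $c_0 \Norm{v}{h}^2 \leq \scal{v}{A_h v}_h$ with $c_0$ independent of $h$ and $K$: one must reproduce at the discrete level the Weinstein spectral argument, which relies on fine information about the negative subspace and kernel of the linearized operator, and ensure that the continuous spectral gap survives the discretization uniformly as $h \to 0$ and $Kh \to \infty$. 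This is precisely what is achieved in \cite{BFG,BP10} by a perturbation argument that compares $A_h$ to the continuous operator $A$ through the embedding $i_h$; I would invoke those results as a black box.

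Finally, for the error estimate I would split
\[
i_h \eta_{h,K} - \eta = \bigl(i_h \eta_{h,K} - i_h \wt\eta_h\bigr) + \bigl(i_h \wt\eta_h - \eta\bigr),
\]
where $\wt\eta_h \in V_{h,K}$ denotes the grid restriction of $\eta$ truncated at $|\ell| \leq K$. The second term is handled by standard piecewise-linear interpolation in $H^1$: it contributes $\O(h)$ on $[-Kh, Kh]$, plus a tail contribution controlled by the explicit decay $|\eta(x)| \leq C e^{-|x|/2}$; converting this tail through \eqref{h1} and the embedding \eqref{eq:ih} is what produces the factor $h^{-2} e^{-C_1 hK}$. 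For the first term I would combine the variational inequality $H_h(\eta_{h,K}) \leq H_h\bigl(\wt\eta_h / \sqrt{N_h(\wt\eta_h)}\bigr)$ with the uniform coercivity established above: a standard convexity bound yields
\[
\Norm{\eta_{h,K} - \wt\eta_h / \sqrt{N_h(\wt\eta_h)}}{h}^2 \leq C\Bigl( H_h\bigl(\wt\eta_h / \sqrt{N_h(\wt\eta_h)}\bigr) - H_h(\eta_{h,K}) \Bigr),
\]
whose right-hand side is itself $\O\bigl(h^2 + h^{-4} e^{-2C_1 hK}\bigr)$ by a consistency computation against $H(\eta)$. Summing the two contributions, and transferring from the discrete norm $\Norm{\cdot}{h}$ to the $H^1(\R)$ norm via $i_h$, delivers the announced bound.
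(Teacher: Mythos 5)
The paper offers no proof of this theorem at all: it is introduced with the sentence ``The main results in \cite{BFG} and \cite{BP10} can [be] expressed as follows,'' so the entire statement is delegated to those references. Your sketch is therefore not in competition with an internal argument; it is a reconstruction of what \cite{BFG,BP10} do, and it correctly identifies the one genuinely hard ingredient --- coercivity of the discrete linearized operator $A_h$ \emph{uniformly} in $h$ and $K$ --- and, like the paper, treats it as a black box from those references. As an outline it is sound: finite-dimensional compactness for existence, the Lagrange relation \eqref{zglob}, the discrete analogue of the coordinates of Section~2, and an interpolation-plus-coercivity argument for the error bound are all the right moves.

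Two soft spots would need attention in a written-out version. First, the statement asserts \emph{uniqueness} of the real minimizer, which compactness does not give; in \cite{BFG} this comes from constructing $\eta_{h,K}$ by a perturbation/implicit-function argument around $\eta$ (local uniqueness from non-degeneracy) combined with the variational characterization, and your sketch should say where uniqueness comes from. Second, your energy-gap argument needs a \emph{two-sided} consistency estimate: bounding $H_h(\wt\eta_h/\sqrt{N_h(\wt\eta_h)})-H_h(\eta_{h,K})$ requires not only that the discrete energy of the interpolant is close to $H(\eta)$ (where you must use criticality of $\eta$ to kill the first-order term and get $\mathcal{O}(h^2)$ rather than $\mathcal{O}(h)$), but also a lower bound $H_h(\eta_{h,K})\geq H(\eta)-\mathcal{O}(h^2+\cdots)$, which is not automatic and is part of what \cite{BFG} actually proves. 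Relatedly, the factor $h^{-2}e^{-C_1 hK}$ is not produced by the naive tail computation: truncating $\eta$ at $|x|=Kh$ costs only $\mathcal{O}(h^{-1/2}e^{-Kh/2})$ in the norm \eqref{h1} (the worst term being the jump $\eta(Kh)/h$ weighted by $h$), so the stated bound remains true but the $h^{-2}$ originates in the inversion of the linearized operator in \cite{BFG}, not where you place it.
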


The fully discrete algorithm corresponding to \eqref{poulguen}-\eqref{boston} then consists in the two following steps: From $(\psi^\ell_n)_{j = -K}^K$ such that $N_h(\psi_n) = 1$, 

\begin{itemize}
\item[(i)] Compute $\psi^*_n = (\psi^{*,\ell}_n)_{j \in \Z}$ defined by the relation 
\begin{equation}
\label{poulguend}
\left\{
\begin{array}{rcl}\displaystyle \psi^{*,\ell}_n &=& \psi^{\ell}_n + \tau \left(\frac12 (\Delta_h \psi_{n}^*)^\ell + 
|\psi^\l_n|^2\psi^{*,\l}_n\right), \quad \ell \in\Z, \quad |\ell| \leq K,\\[2ex]
\psi^{*,\ell}Ñ &=& 0,\quad |\ell| \geq K+1.
\end{array}
\right.
\end{equation}
\item[(ii)] Normalization step: 
\begin{equation}
\label{bostond}
\psi_{n+1}^\ell = \frac{\psi_n^{*,\ell}}{N_h(\psi_n^*)}.
\end{equation}
\end{itemize}

The equation \eqref{zglob} guarantees that this fully discrete algorithm preserves exactly the discrete ground state $\eta_{h,K}$. 
We can now copy the proof of the continuous case an adapt it directly to the fully discrete case: We can prove that 
$$
\Norm{\psi_{n} - \eta_{h,K}}{h} \leq C e^{- cn \tau}, \quad n \geq 0, 
$$
for some constants $C$ and $c$ independent of $h$, $K$ and $\tau$. By using the previous estimate, we obtain the following result: 

\begin{theorem}
There exist constants $B$, $C$, $C_1$ and $c$ such that for $h$, $\tau$ sufficiently small and $K$ sufficiently large, if $(\psi_0) = (\psi^\ell_0)_{\ell = -K}^{K}$ satisfies 
$$
\Norm{i_h \psi_0 - \eta}{H^1} \leq B
$$
then the solution $\psi_n$ of the fully discrete algorithm \eqref{poulguend}-\eqref{bostond} satisfies
$$
\Norm{i_h \psi_n - \eta}{H^1} \leq C \Big(e^{-c n \tau} + h +  \frac{1}{h^2} e^{- C_1 hK } \Big). 
$$
\end{theorem}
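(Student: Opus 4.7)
The plan is to repeat the Lyapunov argument of the preceding theorem in the fully discrete setting, now centered on $\eta_{h,K}$ rather than $\eta$, and to combine the resulting discrete exponential decay with the approximation estimate $\Norm{i_h\eta_{h,K}-\eta}{H^1}\leq C(h+h^{-2}e^{-C_1hK})$ via a triangle inequality. The essential structural property that makes this work is again that the linearly implicit scheme \eqref{poulguend}--\eqref{bostond} preserves $\eta_{h,K}$ exactly, by \eqref{zglob}.

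First, I would translate the hypothesis. Standard finite-element estimates give $c\Norm{v}{h}\leq\Norm{i_hv}{H^1}\leq C\Norm{v}{h}$ uniformly in $h$, so combining $\Norm{i_h\psi_0-\eta}{H^1}\leq B$ with the approximation estimate yields $\Norm{\psi_0-\eta_{h,K}}{h}\leq 2CB$ for $h$ small and $K$ large enough. Writing $\psi_n=(1+r_h(u_n))\eta_{h,K}+u_n$ in the local coordinates provided by the preceding theorem, this becomes $\Norm{u_0}{h}\leq C'B$.

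Next, I would reproduce the algebra of the proof of the semi-discrete lemma verbatim, expanding the discrete nonlinearity as $|\psi_n^\ell|^2=|\eta_{h,K}^\ell|^2+2\eta_{h,K}^\ell u_n^\ell+\mathcal{O}(\Norm{u_n}{h}^2)$ and using \eqref{zglob} to kill the constant-in-$u$ term proportional to $\eta_{h,K}$ at every occurrence (exactly as in Remark \ref{remi}). This produces a recursion on $u_n\in W_h$ of the form
$$
u_{n+1}=u_n+\tau P_{W_h}\Bigl(\frac12\Delta_h u_{n+1}-\lambda_h u_n+\eta_{h,K}^2 u_{n+1}+2\eta_{h,K}^2 u_n\Bigr)+\mathcal{O}\bigl(\tau\Norm{u_n,u_{n+1}}{h}^2\bigr),
$$
the exact discrete analogue of \eqref{wiener}. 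The Lyapunov scheme of the preceding theorem then applies without change: I define $L_h, B_h, A_{h,\tau}$ from the discretized coefficients, set $v_n=(I+\tau B_h)^{1/2}u_n$, and run the induction on $\scal{A_{h,\tau}v_n}{v_n}_h$ to conclude $\Norm{\psi_n-\eta_{h,K}}{h}\leq Ce^{-cn\tau}$ with $C,c$ independent of $h,K,\tau$. The claimed estimate then follows from
$$
\Norm{i_h\psi_n-\eta}{H^1}\leq C\Norm{\psi_n-\eta_{h,K}}{h}+\Norm{i_h\eta_{h,K}-\eta}{H^1}.
$$

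The main obstacle is ensuring that all the constants in the Lyapunov argument --- the coercivity $\scal{A_{h,\tau}v}{v}_h\geq c\Norm{v}{h}^2$, the lower bound on $\Norm{A_{h,\tau}v}{h}$ analogous to \eqref{monique}, and the boundedness $|\scal{A_{h,\tau}v}{w}_h|\leq C\Norm{v}{h}\Norm{w}{h}$ --- can be chosen uniformly in $h$, $K$ and $\tau$. This uniformity is precisely the content of the convexity estimate recorded in the preceding theorem and established in \cite{BFG,BP10}, so once it is taken as granted the remainder of the proof is a mechanical transposition of the argument from Section~4.
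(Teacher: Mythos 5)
Your proposal is correct and follows exactly the route the paper takes: the paper itself only sketches this proof, asserting that the semi-discrete Lyapunov argument transfers verbatim to give $\Norm{\psi_n-\eta_{h,K}}{h}\leq Ce^{-cn\tau}$ uniformly in $h,K,\tau$ (thanks to the uniform convexity estimate and the exact preservation of $\eta_{h,K}$ via \eqref{zglob}), and then concludes by the triangle inequality with the approximation estimate for $i_h\eta_{h,K}-\eta$. Your write-up supplies somewhat more detail than the paper does on the same steps.
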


\begin{remark}
Following Remark \ref{rutgers}, we can prove a similar result for the implicit-explicit and fully implicit algorithms \eqref{otherd}, but the fully discrete schemes will converge towards a discrete ground state $\eta_{\tau,h,K}$ satisfying 
$$
\Norm{i_h \eta_{\tau,h,K} - \eta}{H^1} \leq C \Big(\tau + h +  \frac{1}{h^2} e^{- C_1 hK } \Big), 
$$
and the previous result has to be modified accordingly in these cases. 
\end{remark}

\end{document}